\DeclareMathOperator{\pnt}{\raise 0.5mm \hbox{\large\textbf{.}}}
\newcommand{\note}[2][ ]{}
\newtheorem{theorem}{Theorem}
\newtheorem{lemma}[theorem]{Lemma}
\newtheorem{proposition}[theorem]{Proposition}
\theoremstyle{definition}
\newtheorem{question}[theorem]{Question}
\begin{document}
\title[Log-concavity of level Hilbert functions and pure $O$-sequences]{Log-concavity of level Hilbert functions\\and pure $O$-sequences}
\author{Fabrizio Zanello} \address{Department of Mathematical Sciences\\ Michigan Tech\\ Houghton, MI  49931}
\email{zanello@mtu.edu}
\thanks{2020 \emph{Mathematics Subject Classification.} Primary: 13D40; Secondary: 05E40, 13E10, 13H10.\\\indent 
\emph{Key words and phrases.} Hilbert function; pure $O$-sequence; log-concavity; unimodality; level algebra; Gorenstein algebra; Interval Conjecture.}

\maketitle

\begin{abstract}
We investigate log-concavity in the context of level Hilbert functions and pure $O$-sequences, two classes of numerical sequences introduced by Stanley in the late Seventies whose structural properties have since been the object of a remarkable amount of interest in combinatorial commutative algebra. However, a systematic study of the log-concavity of these sequences began only recently, thanks to a paper by Iarrobino.

The goal of this note is to address two general questions left open by Iarrobino's work: 1) Given the integer pair $(r,t)$, are all level Hilbert functions of codimension $r$ and type $t$ log-concave? 2) How about pure $O$-sequences with the same parameters? 

Iarrobino's main results consisted of a positive answer to 1) for $r=2$ and any $t$, and for $(r,t)=(3,1)$. Further, he proved that the answer to 1) is negative for $(r,t)=(4,1)$.

Our chief contribution to 1) is to provide a negative answer in all remaining cases, with the exception of $(r,t)=(3,2)$, which is still open in any characteristic. We then propose a few detailed conjectures specifically on level Hilbert functions of codimension 3 and type 2. 

As for question 2), we show that the answer is positive for all pairs $(r,1)$; negative for $(r,t)=(3,4)$; and negative for any pair $(r,t)$ with $r\ge 4$ and $2\le t\le r+1$. Interestingly, the main case that remains open is again $(r,t)=(3,2)$. Further, we conjecture that, in analogy with the behavior of arbitrary level Hilbert functions, log-concavity fails for pure $O$-sequences of any codimension $r\ge 3$ and type $t$ large enough.
\end{abstract}

\section{Introduction and main definitions}

This note is inspired by the recent paper \cite{Ia2}, where Iarrobino began investigating the log-concavity of the Hilbert functions of Gorenstein and level artinian graded algebras, in response to a question posed by Chris McDaniel. Surprisingly, despite the substantial amount of literature devoted to understanding the algebraic, combinatorial, and geometric properties of these Hilbert functions, before \cite{Ia2} little attention was paid to log-concavity, an important numerical property with applications to several mathematical fields (see, e.g., the classical surveys \cite{Bre,Stan}). Recall that a sequence $(a_n)_{n\in \mathbb N}$ is said to be \emph{log-concave} if $a_{i-1}a_{i+1}\leq a_i^2$, for all indices $i$.

Iarrobino's main results are that log-concavity holds for all Gorenstein Hilbert functions of codimension 3, while it fails in codimension 4 (see below, as well as \cite{Ia2}, for the relevant terminology). Also, log-concavity holds for all level Hilbert functions of codimension 2. Thus, two general and natural questions that arise from \cite{Ia2} are: 1) Given the integer pair $(r,t)$, are all level Hilbert functions of codimension $r$ and type $t$ log-concave? 2) How about pure $O$-sequences (i.e., \emph{monomial} level Hilbert functions) with the same parameters?

The goal of this note is to make progress on both questions. In Section 2, we investigate part 1). We provide a full characterization of the pairs $(r,t)$ for which the answer is positive, with the sole exception of $(r,t)=(3,2)$, which remains open in any characteristic. Note that this is also one of the main unsolved cases when it comes to unimodality. We then propose and discuss in detail five open problems specifically on level Hilbert functions of codimension 3 and type 2, which we hope will be the object of future research in this area.

In Section 3, we address part 2). Our main results are a positive answer for all pairs $(r,1)$; negative for $(r,t)=(3,4)$; and negative for all pairs $(r,t)$ such that $r\ge 4$ and $2\le t\le r+1$. In all instances where log-concavity fails, it will be possible to exhibit some elegant, infinite families of pure $O$-sequences with the relevant parameters. Also interesting, the main case left open is again $(r,t)=(3,2)$. Further, we conjecture that, in analogy with the situation in the arbitrary level case, log-concavity fails for pure $O$-sequences of any given codimension $r\ge 3$ and type $t$ large enough.

We now briefly recall the main definitions. We refer to \cite{Ia2} and to standard texts such as \cite{St0} (for level Hilbert functions) and \cite{BMMNZ} (for pure $O$-sequences) for any basic facts or unexplained terminology.

We consider \emph{standard graded artinian algebras} $A=R/I$, where $R = k[x_1,\dots,x_r]$ is a polynomial ring over an infinite field $k$ (\emph{a priori,} we make no assumptions on the characteristic of $k$), $I\subset R$ is a homogeneous ideal, and all $x_i$ have degree one. Assuming without loss of generality that $I$ contains no nonzero linear forms, we define $r$ as the \emph{codimension} of $A$.

Recall that the \emph{Hilbert function} of $A$ is given by $h_i=\dim_k A_{i}$, for all $i\ge 0$. It is a standard fact that $A$ is artinian if and only if its Hilbert function is zero in all large degrees. Hence, we may identify the Hilbert function of $A$ with its \emph{$h$-vector} $h(A)=(1,h_1,\dots,h_e)$, where $e$ is the largest index such that $h_e>0$, and is called the \emph{socle degree} of $A$ (or of $h(A)$).

The \emph{socle} of $A$ is the annihilator of the maximal homogeneous ideal $(\overline{x_1}, \dots ,\overline{x_r})$. We define the \emph{socle-vector} of $A$ as $s=(0,s_1,\dots,s_e)$, where $s_i$ is the dimension over $k$ of the socle in degree $i$. We then say that $A$ is \emph{level} if $s=(0,\dots,0,t)$, i.e., if its socle is concentrated in the last degree. The integer $t=s_e=h_e$ is called the \emph{Cohen-Macaulay type} (or simply \emph{type}) of $A$.  Finally, $A$ is \emph{Gorenstein} if $A$ is level of type $t=1$.

We say that a Hilbert function $h=(1,h_1=r,h_2,\dots,h_e=t)$ is \emph{level} (of codimension $r$ and type $t$) if there exists a level algebra $A$ with $h(A)=h$.

A \emph{pure $O$-sequence} is the Hilbert function of a \emph{monomial} level algebra. Equivalently, in combinatorial terms, $h=(1,h_1=r,h_2,\dots,h_e=t)$ is a pure $O$-sequence (of codimension $r$ and type $t$) if there exist $t$ (monic) monomials of degree $e$ in $r$ variables having a total of $h_i$ monomial divisors in degree $i$, for all $i$.

For instance, $(1,3,4,2)$ is a pure $O$-sequence of codimension $3$, type $2$, and socle degree $3$, generated by the monomials $xyz$ and $xz^2$. Indeed, these two monomials have a total of four monomial divisors in degree 2 ($xy,xz,yz,z^2$), and three in degree 1 ($x,y,z$). 

Level Hilbert functions and pure $O$-sequences --- as well as much of the combinatorial commutative algebra that we know today --- find their inception in Richard Stanley's seminal work of the late Seventies \cite{St1,St2}. These two topics have since constituted an unusually active and successful research area of commutative algebra, to which this author also devoted at least a decade of his mathematical career. We refer to \cite{PSZ} and its references for a nonexhaustive list of contributions to the theory of level Hilbert functions since Richard's original manuscripts, and to the AMS Memoir \cite{BMMNZ} (and references thereof) specifically for pure $O$-sequences. We hope that this brief paper, including the open problems we present in the next two sections, can stimulate further work in this area. 

\section{Log-concavity of arbitrary level Hilbert functions}

The goal of this section is to characterize the pairs $(r,t)$ such that all level Hilbert functions of codimension $r$ and type $t$ are log-concave. We will achieve this with the sole exception of $(3,2)$, which remains open in any characteristic.  

We first need to review \emph{(Macaulay's) inverse systems}, also known as {\em Matlis duality}. For two good, in-depth introductions to this theory, we refer the reader to \cite{Ge,IK}. For simplicity's sake, in the brief description that follows we assume that the  characteristic of the base field $k$ is zero (or larger than the socle degree of the algebra), but the relevant parts of the theory, including Lemma \ref{lem1} and Proposition \ref{ia1} below, can be made characteristic-free using so-called \emph{divided powers} in lieu of differentials.  

Given a polynomial ring $R=k[x_1,\dots,x_r]$, consider the graded $R$-module $S=k[y_1,\dots,y_r]$, where $x_i$ acts on $S$ by partial differentiation with respect to $y_i$. This establishes a bijective correspondence between artinian algebras $R/I$ and finitely generated $R$-submodules $M$ of $S$, where $I=$ Ann$(M)\subset R$ is the annihilator of $M$, and $M=I^{-1}$ is the submodule of $S$ annihilated by $I$ (see \cite{Ge}, Remark 1, p. 17).

If $R/I$ has socle-vector $s=(0,s_1,\dots,s_e)$, then $M$ is (minimally) generated by $s_i$ homogeneous forms of degree $i$, for all $i=1,\dots,e$. Further, the Hilbert function of $R/I$ is given by the dimensions of the vector spaces spanned by the partial derivatives of the generators of $M$ in each degree (\cite{Ge}, Remark 2, p. 17). It follows that level algebras of type $t$ and socle degree $e$ correspond to $R$-submodules of $S$ generated by $t$ forms of degree $e$.

We only remark here that the equivalence between the algebraic and combinatorial definitions of pure $O$-sequences given earlier can be viewed as a special case of Macaulay's inverse systems: namely, when the modules $M$ are generated by monomials (since, clearly, the divisors of a given monomial coincide with its partial derivatives, up to constant factors).

We are now ready to state the following two technical results from \cite{Ia1}. They both hold characteristic-free. (We present the proposition below in the context of level algebras, which suffices for the purpose of this paper, but it remains true under more general hypotheses.)

\begin{lemma}[\cite{Ia1}, Proposition 4.7]\label{lem1}
Let $F=\sum_{i=1}^m L_i^e \in S=k[y_1,\dots,y_r]$, where the $L_i$ are \emph{general} linear forms (according to the Zariski topology). Then the Gorenstein algebra $R/Ann(F)$ dual to $F$ has Hilbert function
$$h(m)=(1,h_1(m),\dots,h_e(m)=1),$$
where, for $j=1,\dots,e$,
$$h_j(m)=\min \lbrace m,\dim_kR_j,\dim_kR_{e-j}\rbrace .$$
\end{lemma}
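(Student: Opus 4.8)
The plan is to prove Lemma \ref{lem1} by analyzing the vector space spanned by the partial derivatives of $F=\sum_{i=1}^m L_i^e$ in each degree $j$, since by the inverse systems dictionary recalled above, $h_j(m)=\dim_k R_j\circ F$, the dimension of the space of $j$-th order partials of $F$. A preliminary reduction: the $j$-th partials of $F$ are themselves homogeneous of degree $e-j$, so $R_j\circ F$ is a subspace of $S_{e-j}$, giving the bounds $h_j(m)\le \dim_k S_{e-j}=\dim_k R_{e-j}$ and $h_j(m)\le \dim_k R_j$ (the latter because $R_j\circ F$ is spanned by applying the $\binom{r+j-1}{j}=\dim_k R_j$ monomial operators of degree $j$). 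Together with the obvious bound $h_j(m)\le m$ (see next paragraph), this establishes the ``$\le$'' direction of the asserted formula $h_j(m)=\min\{m,\dim_k R_j,\dim_k R_{e-j}\}$. The real content is the reverse inequality, i.e.\ that for \emph{general} linear forms these upper bounds are simultaneously attained.

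First I would handle the bound $h_j(m)\le m$ and set up the genericity argument. A $j$-th partial of a single power $L_i^e$ is a scalar multiple of $\ell_i^{(j)}L_i^{e-j}$ where $\ell_i^{(j)}$ is the corresponding linear form in the coefficients of $L_i$; hence $R_j\circ F$ is contained in the span of $L_1^{e-j},\dots,L_m^{e-j}$, which has dimension at most $m$. For the lower bound, the standard move is to observe that $h_j(m)$, as a function of the coefficients of $L_1,\dots,L_m$, is the rank of an explicit matrix (the \emph{catalecticant}-type matrix whose entries are polynomial in those coefficients), so the locus where it drops below its generic value is Zariski closed; thus it suffices to exhibit \emph{one} choice of $m$ linear forms achieving $h_j=\min\{m,\dim_k R_j,\dim_k R_{e-j}\}$ for every $j$ simultaneously. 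The cleanest construction is to take the $L_i$ to lie on the rational normal curve: choose $L_i=\sum_{p=1}^r a_i^{\,p-1}x_p$ (or rather the dual variable version in $S$) for distinct scalars $a_1,\dots,a_m$. One then knows that powers of such forms are ``as independent as possible,'' which is exactly the Alexander--Hirschowitz / apolarity statement in the special case of points on a rational normal curve and can be proved here directly.

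The key computation is therefore: for $L_i$ corresponding to distinct parameters $a_i$, show that $\dim_k\langle \ell^{(j)}_i L_i^{e-j}: i=1,\dots,m\rangle = \min\{m,\dim_k R_j,\dim_k R_{e-j}\}$ for each fixed $j$. The space $R_j\circ F$ is spanned by $\{L_i^{e-j}\}$ when $j\le $ something, but more precisely one should pass to the symmetric-power picture: $R_j\circ F$ is the image of the linear map $R_j\to S_{e-j}$, $D\mapsto D\circ F$, and dualizing, its dimension equals $\dim_k R_{e-j}$ minus the dimension of the space of degree-$(e-j)$ forms in $S$ annihilated by all of $F$'s $j$-th partials --- equivalently one reduces to a Vandermonde-type rank computation. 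Concretely, in the one-variable shadow, evaluation/derivative functionals at the points $a_i$ give a confluent Vandermonde matrix of size governed by $m$, $\dim_k R_j$, and $\dim_k R_{e-j}$, whose maximality of rank is classical. I would carry this out by: (1) reducing to showing the map $k^m\to R_j\circ F$ has full rank $\min\{m,\dim R_j\}$ and, dually, that $R_j\circ F$ has dimension $\min\{\dim R_j,\dim R_{e-j}\}$ when $m$ is large; (2) combining the two via the genericity (semicontinuity) principle to conclude the min of all three is attained generically.

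The main obstacle will be step (3), the genuinely multivariate independence statement: showing that for general $L_i$ the powers $L_i^{e-j}$ (weighted by the appropriate first-order factors coming from differentiation) span a space of the expected dimension $\min\{m,\dim_k R_{e-j},\dim_k R_j\}$ \emph{for all $j$ at once}. For a single degree this is the statement that general powers of linear forms are linearly independent up to the obvious bound (a classical fact, e.g.\ via the rational normal curve and non-vanishing of a generalized Vandermonde determinant), but one must check the chosen family is simultaneously general for every $j$ --- handled by noting the bad locus for each $j$ is a proper closed subset and there are only finitely many $j$, so a generic point avoids all of them. I expect the cleanest write-up cites the known ``independence of powers of general linear forms'' result and the semicontinuity of matrix rank, reducing the proof to the three elementary upper bounds plus this invocation; since the lemma is quoted from \cite{Ia1}, reproducing that argument in full is not required, but the above is the structure I would follow.
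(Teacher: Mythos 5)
First, a point of reference: the paper does not prove Lemma~\ref{lem1} at all --- it is imported verbatim from \cite{Ia1}, Proposition 4.7 --- so there is no internal argument to compare yours with; what follows assesses your proposed proof on its own terms. Your overall scheme (the three easy upper bounds $h_j(m)\le m$, $h_j(m)\le\dim_kR_j$, $h_j(m)\le\dim_kR_{e-j}$, then semicontinuity of the catalecticant rank, then one explicit configuration attaining the bound, with the finitely many degrees $j$ handled by intersecting finitely many dense open loci) is the standard route and is sound in outline.

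The genuine gap is in your proposed witness. Linear forms $L_i=\sum_p a_i^{\,p-1}y_p$ lying on the cone over the rational normal curve are \emph{not} general enough once $r\ge 3$: every power $L_i^{d}$ lies in the span of the degree-$d$ Veronese image of that curve, a subspace of $S_d$ of dimension only $d(r-1)+1<\binom{r-1+d}{d}$, and dually $m$ points on the curve impose at most $j(r-1)+1$ conditions on forms of degree $j$. Concretely, take $r=3$, $m=6$, $j=2$, $e$ large: the lemma asserts $h_2=\min\{6,\dim_kR_2,\dim_kR_{e-2}\}=6$, but six points on a conic impose only $5$ conditions on conics (the conic itself lies in the kernel of the evaluation map), so your configuration yields $h_2=5$. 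Thus the ``confluent Vandermonde'' computation only controls the one-variable shadow and cannot certify the multivariate rank, which is exactly where the content of the lemma sits. What is actually needed (and suffices) is: (a) $m$ general points of $\PP^{r-1}$ impose $\min\{m,\dim_kR_j\}$ independent conditions on forms of degree $j$; (b) the powers $L_1^{e-j},\dots,L_m^{e-j}$ of general linear forms are linearly independent whenever $m\le\dim_kR_{e-j}$; and (c) a short monotonicity-in-$m$ (or pass-to-a-subset) argument, together with the Gorenstein symmetry $h_j=h_{e-j}$, to handle the range $m>\dim_kR_{e-j}$ where the minimum is $\dim_kR_j$. Your write-up gestures at (a) and (b) but then replaces them with the rational-normal-curve specialization, which fails; also, the appeal to Alexander--Hirschowitz is a misattribution --- that theorem concerns double points and secant varieties of Veronese varieties, and is not needed for the simple-point statements above.
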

 
\begin{proposition}[\cite{Ia1}, Theorem 4.8 A]\label{ia1}
Let $h=(1,h_1,\dots,h_e)$ be the Hilbert function of a level algebra $A=R/I$, where $I$ annihilates the submodule $M$ of $S$. Let $m\leq \binom{r-1+e}{e}-h_e$, and consider a form $F\in S$ that is the sum of the $e$-th powers of $m$ general linear forms. (The Hilbert function, $h(m)$, of the Gorenstein algebra dual to $F$ is described in Lemma \ref{lem1}.) Then the level algebra dual to the module $\langle M,F \rangle $ has Hilbert function $H=(1,H_1,\dots,H_e),$ where, for $j=1,\dots,e$,
$$H_j=\min \left \lbrace h_j+h_j(m),\binom{r-1+j}{j} \right \rbrace .$$
\end{proposition}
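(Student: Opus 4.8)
The plan is to pass to Macaulay's inverse systems and then reduce everything to an elementary fact about general points imposing conditions on a fixed linear system. Since $A$ is level of socle degree $e$, the module $M$ is generated in degree $e$; as $F$ also has degree $e$, the module $\langle M,F\rangle$ is generated in degree $e$, with $\langle M,F\rangle_e=M_e+kF$. Because contracting by a graded piece of $R$ is additive on sums of subspaces of $S_e$, for $1\le j\le e$ we obtain $H_j=\dim_k\bigl(R_{e-j}\circ\langle M,F\rangle_e\bigr)=\dim_k(V_j+W_j)$, where $V_j:=R_{e-j}\circ M_e$ and $W_j:=R_{e-j}\circ F$ are subspaces of $S_j$. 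By the inverse-systems description of Hilbert functions, $\dim_k V_j=h_j$; and by Lemma~\ref{lem1} applied to the Gorenstein algebra $R/\Ann(F)$, $\dim_k W_j=h_j(m)$. Since $\dim_k S_j=\binom{r-1+j}{j}$, this immediately gives $H_j\le\min\{h_j+h_j(m),\binom{r-1+j}{j}\}$, and the whole content of the proposition is the reverse inequality: for \emph{general} $L_1,\dots,L_m$, the fixed subspace $V_j$ and the subspace $W_j$ are in linearly general position inside $S_j$, simultaneously for every $j$.

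To establish this, observe first that $H_j$ is a lower-semicontinuous function of $(L_1,\dots,L_m)\in(S_1)^m$ — it is the rank of a matrix whose entries are polynomials in the coefficients of the $L_i$ — so for each $j$ it attains its maximum on a dense Zariski-open subset, and since a finite intersection of dense open sets is dense open, it suffices to produce, \emph{for each $j$ separately}, one tuple realizing $\dim_k(V_j+W_j)=\min\{h_j+h_j(m),\binom{r-1+j}{j}\}$. Now dualize. With $I:=\Ann(M)$, the perfect apolarity pairing $R_j\times S_j\to k$ identifies $(V_j+W_j)^\perp$ with $V_j^\perp\cap W_j^\perp=I_j\cap\Ann(F)_j$, so the target becomes
\[
\dim_k\bigl(I_j\cap\Ann(F)_j\bigr)=\max\Bigl\{0,\ \dim_k I_j+\dim_k\Ann(F)_j-\binom{r-1+j}{j}\Bigr\},
\]
i.e.\ that the fixed $I_j$ and $\Ann(F)_j$ meet minimally in $R_j$. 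Since $D\circ L_i^e$ is a scalar multiple of $L_i^{e-j}$ for every $D\in R_j$, we have $D\in\Ann(F)_j$ precisely when $\bigl(D(L_1),\dots,D(L_m)\bigr)$ lies in the space $\Lambda\subseteq k^m$ of linear relations among $L_1^{e-j},\dots,L_m^{e-j}$; hence $\Ann(F)_j=\mathrm{ev}^{-1}(\Lambda)$ for the evaluation map $\mathrm{ev}\colon R_j\to k^m$, $D\mapsto(D(L_i))_i$, and $I_j\cap\Ann(F)_j=(\mathrm{ev}|_{I_j})^{-1}(\Lambda)$.

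Writing $\dim_k(\mathrm{ev}|_{I_j})^{-1}(\Lambda)=\dim\ker(\mathrm{ev}|_{I_j})+\dim\bigl(\mathrm{ev}(I_j)\cap\Lambda\bigr)$, the displayed identity reduces to two genericity statements for general $L_i$: \textbf{(i)} $\mathrm{ev}|_{I_j}$ has the maximal rank $\min\{m,\dim_k I_j\}$; and \textbf{(ii)} the subspaces $\mathrm{ev}(I_j)$ and $\Lambda$ of $k^m$ are in linearly general position — equivalently, $\dim_k(I_j\circ F)$ attains its a priori bound $\min\{m,\dim_k I_j,\binom{r-1+e-j}{e-j}\}$. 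Fact (i) is elementary: $m$ general points impose the maximal number of independent conditions on \emph{any} fixed finite-dimensional space of forms, proved by induction on $m$ (if the first $m-1$ points have not yet annihilated $I_j$, choose a nonzero $D_0\in I_j$ vanishing at them and a new point off the hypersurface $D_0=0$). For (ii), since $\Sigma_m:=\{\,L_1^e+\dots+L_m^e\,\}$ contains $\Sigma_{m'}$ for every $m'\le m$ (set the extra forms to $0$) and the locus where $\dim_k(I_j\circ F)$ is maximal is open in the irreducible $\Sigma_m$, it is enough to treat $m\le\binom{r-1+e-j}{e-j}$; there $L_1^{e-j},\dots,L_m^{e-j}$ are linearly independent, $\Lambda=0$, $I_j\circ F\cong I_j/\bigl(I_j\cap(I_Z)_j\bigr)$ with $Z=\{[L_1],\dots,[L_m]\}$, and (ii) follows from (i) again. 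Together with the reformulation above this yields $\dim_k(V_j+W_j)=\min\{h_j+h_j(m),\binom{r-1+j}{j}\}$ for a suitable — hence, by semicontinuity, a general — choice of $L_1,\dots,L_m$, as required. The main obstacle is step (ii): both $\Lambda$ and $\mathrm{ev}(I_j)$ vary with the $L_i$, so one cannot just invoke ``a general subspace of $k^m$ meets a fixed subspace minimally'' and genuinely needs the degeneration to small $m$; the inverse-systems bookkeeping, the upper bound, and fact (i) are all routine.
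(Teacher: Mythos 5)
The paper does not prove Proposition \ref{ia1}: it is quoted from \cite{Ia1}, Theorem 4.8\,A, and used as a black box, so there is no internal proof to compare yours against --- your write-up is genuinely supplementary. As far as I can check, your argument is correct in the setting the paper adopts for inverse systems (characteristic zero, or larger than the socle degree). The reduction $H_j=\dim_k\bigl(R_{e-j}\circ M_e+R_{e-j}\circ F\bigr)$, the dualization $(V_j+W_j)^{\perp}=I_j\cap\Ann(F)_j$, the description $\Ann(F)_j=\mathrm{ev}^{-1}(\Lambda)$, and the dimension bookkeeping are all right; in fact the whole statement collapses to $H_j=h_j+\dim_k(I_j\circ F)$, so your two genericity inputs (i) and (ii) amount exactly to showing $\dim_k(I_j\circ F)=\min\{m,\dim_k I_j,\dim_k S_{e-j}\}$ for general $L_i$, and I verified that this min-arithmetic does reproduce $\min\{h_j+h_j(m),\binom{r-1+j}{j}\}$ in all cases. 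The degeneration to $m'=\min\{m,\dim_k S_{e-j}\}$ powers, where $\Lambda=0$ and everything follows from the classical fact that general points impose independent conditions on a fixed space of forms, plus lower semicontinuity, closes the only nontrivial step; I would phrase the semicontinuity on the irreducible parameter space $(S_1)^m$ rather than on the image $\Sigma_m$ (which is only constructible), but that is cosmetic. Two small caveats: the constant in $D\circ L^e=c\,D(\ell)L^{e-j}$ is $e(e-1)\cdots(e-j+1)$, so as written your proof needs $\operatorname{char}k=0$ or $>e$, whereas the proposition is asserted characteristic-free --- the adaptation via divided powers (where the analogous contraction formula holds with coefficient $1$) is routine but should be acknowledged; and your fact (i) uses that $k$ is infinite, which the paper does assume. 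What your route buys is a self-contained, elementary proof by linear algebra, apolarity, and semicontinuity, in place of the citation to Iarrobino's compressed-algebras machinery; it also makes transparent that the hypothesis $m\le\binom{r-1+e}{e}-h_e$ is not needed for the Hilbert-function formula itself.
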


We next record an elementary fact that will be used repeatedly in this paper.

\begin{lemma}\label{aaa}
Let $a$ be a positive integer, and consider three consecutive, maximal entries of a codimension 3 Hilbert function, say $h_{i-1}=\binom{i+1}{2}, h_i=\binom{i+2}{2}, h_{i+1}=\binom{i+3}{2}$. Then the sequence
$$h_{i-1}+a, {\ }h_{i}+a, {\ }h_{i+1}+a$$
is log-concave if and only if $a\le h_i$.
\end{lemma}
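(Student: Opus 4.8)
The plan is to reduce the claimed equivalence to a single polynomial inequality in $i$ and $a$ and check it directly. Write $b_{i-1}=h_{i-1}+a=\binom{i+1}{2}+a$, $b_i=h_i+a=\binom{i+2}{2}+a$, $b_{i+1}=h_{i+1}+a=\binom{i+3}{2}+a$. Log-concavity of the triple $b_{i-1},b_i,b_{i+1}$ means $b_{i-1}b_{i+1}\le b_i^2$, i.e.
\[
\left(\binom{i+1}{2}+a\right)\left(\binom{i+3}{2}+a\right)\le\left(\binom{i+2}{2}+a\right)^2.
\]
Expanding both sides, the $a^2$ terms cancel, and the claim becomes a \emph{linear} inequality in $a$: collecting terms, one gets $a\cdot\left(\binom{i+1}{2}+\binom{i+3}{2}-2\binom{i+2}{2}\right)\le \binom{i+2}{2}^2-\binom{i+1}{2}\binom{i+3}{2}$. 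So the whole lemma hinges on evaluating these two constants.

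First I would compute the coefficient of $a$ on the left. Since $\binom{i+1}{2},\binom{i+2}{2},\binom{i+3}{2}$ are three consecutive triangular numbers, their second difference is $\binom{i+1}{2}+\binom{i+3}{2}-2\binom{i+2}{2}=1$. (Indeed, the first differences are $i+1$ and $i+2$, whose difference is $1$.) Hence the left-hand coefficient of $a$ is exactly $1$, and the inequality reduces to $a\le \binom{i+2}{2}^2-\binom{i+1}{2}\binom{i+3}{2}$. Next I would show the right-hand side equals $h_i=\binom{i+2}{2}$. Writing $h_i=\binom{i+2}{2}$, $h_{i-1}=h_i-(i+1)$, $h_{i+1}=h_i+(i+2)$, we get $h_i^2-h_{i-1}h_{i+1}=h_i^2-(h_i-(i+1))(h_i+(i+2))=h_i^2-\big(h_i^2+h_i(i+2)-h_i(i+1)-(i+1)(i+2)\big)=-h_i+(i+1)(i+2)$. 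Since $(i+1)(i+2)=2\binom{i+2}{2}=2h_i$, this is $-h_i+2h_i=h_i$. Therefore the log-concavity inequality is exactly $a\le h_i$, which is the asserted equivalence.

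The routine but slightly delicate point, and the only place where care is genuinely needed, is keeping track of the indices: the three binomial coefficients are $\dim_k R_{i-1},\dim_k R_i,\dim_k R_{i+1}$ for $R=k[x_1,x_2,x_3]$, i.e. $\binom{i+1}{2},\binom{i+2}{2},\binom{i+3}{2}$ as stated, and the cancellation of $a^2$ together with the second-difference computation must line up with these specific values. Once those are fixed, everything is a short algebraic identity; I would present it as the two displayed computations above, noting that the positivity hypothesis $a>0$ is not even used for the equivalence itself (it only guarantees the entries are genuinely increased). I expect no real obstacle here — the content of the lemma is simply the fortunate fact that the $a^2$ terms cancel, leaving a linear condition whose threshold turns out to be precisely $h_i$.
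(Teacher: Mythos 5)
Your proposal is correct and follows essentially the same route as the paper, which sets up the same inequality $\left(\binom{i+1}{2}+a\right)\left(\binom{i+3}{2}+a\right)\le\left(\binom{i+2}{2}+a\right)^2$ and leaves the computation to the reader; you simply carry out that computation explicitly (cancellation of $a^2$, second difference equal to $1$, and $h_i^2-h_{i-1}h_{i+1}=h_i$), all of which checks out.
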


\begin{proof}
We want to determine the values of $a$ which satisfy the inequality:
$$\left(\binom{i+1}{2}+a\right) \left(\binom{i+3}{2}+a\right) \leq \left(\binom{i+2}{2}+a\right)^2.$$
A straightforward computation, that we leave to the reader, gives us that this is the case precisely when $a\leq \binom{i+2}{2}$.
\end{proof}

Finally, we recall Iarrobino's main result from \cite{Ia2}, which again holds characteristic-free.

\begin{theorem}\label{iarro}
\begin{enumerate}
\item All level Hilbert functions of codimension $2$, and all Gorenstein Hilbert functions of codimension $3$ are log-concave.
\item Log-concavity fails for Gorenstein Hilbert functions of codimension $4$. One example, which is not log-concave in degrees 9 through 13, is the following of socle degree 28. (See \cite{Ia2}, Example 3.5; by symmetry, here we only give the first half of $h$.)
$$h=(1,4,10,20,35,56,84,120,h_8=165,175,186,198,211,225,h_{14}=240,\dots,h_{28}=1).$$
\end{enumerate}
\end{theorem}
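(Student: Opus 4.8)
The plan is to prove the two parts by different means, and within part~(1) to treat the codimension-$2$ level case and the codimension-$3$ Gorenstein case separately.

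For \textbf{codimension $2$}, I would argue via the Hilbert--Burch theorem. If $A=R/I$ is a codimension-$2$ level algebra of type $t$ and socle degree $e$, then being level forces all of the last syzygies of $A$ over $R=k[x_1,x_2]$ into the single internal degree $e+2$, and Hilbert--Burch then shows that $I$ has exactly $t+1$ minimal generators, of degrees $2\le d_1\le\cdots\le d_{t+1}\le e+1$, with minimal free resolution
$$0\longrightarrow R(-(e+2))^{t}\longrightarrow \bigoplus_{i=1}^{t+1}R(-d_i)\longrightarrow R\longrightarrow A\longrightarrow 0.$$
Reading the Hilbert function off this resolution yields, for $0\le m\le e+1$,
$$h_m-h_{m-1}=1-\#\{\,i : d_i\le m\,\},$$
the syzygy term contributing nothing in that range. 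The right-hand side is non-increasing in $m$, so the $h$-vector $(1,h_1,\dots,h_e)$ is a strictly positive \emph{concave} sequence, and log-concavity then follows at once from the chain $h_{m-1}h_{m+1}\le\big(\tfrac{h_{m-1}+h_{m+1}}{2}\big)^{2}\le h_m^{2}$, where the first inequality is AM--GM and the second is concavity (the case $m=e$ being trivial since $h_{e+1}=0$).

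For \textbf{codimension $3$ Gorenstein}, I would invoke Stanley's characterization (a consequence of the Buchsbaum--Eisenbud structure theorem): $h=(1,3,h_2,\dots,h_e=1)$ is such a Hilbert function if and only if it is symmetric and the first half of its difference sequence $(1,\,h_1-h_0,\,h_2-h_1,\,\dots)$ is an O-sequence. As this difference sequence begins $1,2$, it is a codimension-$\le 2$ O-sequence, so by Macaulay it has the shape ``$1,2,3,\dots$ with increments of $1$ along the maximal line, and then non-increasing (and non-negative).'' Writing $g_i=h_i-h_{i-1}$ and using $h_{i-1}=h_i-g_i$, $h_{i+1}=h_i+g_{i+1}$, the log-concavity inequality at $i$ becomes
$$h_i(g_i-g_{i+1})+g_ig_{i+1}\ge 0.$$
Where $g$ is non-increasing every summand is non-negative; where $g$ is strictly increasing one has $g_{i+1}=g_i+1$ and $g_i=i+1$, hence $h_i=\binom{i+2}{2}$ (all earlier differences being maximal as well), so the inequality reduces to $\binom{i+2}{2}\le (i+1)(i+2)=2\binom{i+2}{2}$ --- which is exactly Lemma~\ref{aaa} with $a=0$ and always holds. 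Since $h$ is symmetric it suffices to verify indices up to the midpoint; these are covered above, the single central index (when $e$ is even) reducing to $h_{(e/2)-1}\le h_{e/2}$, i.e.\ unimodality.

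Part~(2) asks for a \emph{single} codimension-$4$ Gorenstein Hilbert function that fails log-concavity, and the displayed $h$ of socle degree $28$ is the candidate. The substantive step --- and the main obstacle --- is to exhibit an actual codimension-$4$ Gorenstein artinian algebra realizing this $h$-vector. This is delicate because $h$ is unimodal and only just misses log-concavity, so no ``generic'' construction works (the compressed Gorenstein algebra of codimension $4$ and socle degree $28$ has $h_9=220$, not $175$), and there is no usable classification of codimension-$4$ Gorenstein Hilbert functions to appeal to. I would build such an algebra through Macaulay's inverse systems, taking the dual form $F\in k[y_1,\dots,y_4]_{28}$ not generic but of a carefully degenerate kind (for instance a general form in a proper subset of the variables plus controlled correction terms) so that the spans of its partials in each degree have exactly the prescribed dimensions; this is the construction of \cite{Ia2}. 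Granting it, the failure of log-concavity is a one-line check: $h_8h_{10}=165\cdot 186=30690>30625=175^{2}=h_9^{2}$, and similarly at each of the degrees $10$ through $13$. Finally, all three arguments are characteristic-free: one replaces partial differentiation by divided powers wherever the inverse-systems machinery intervenes, and the Hilbert--Burch and Buchsbaum--Eisenbud structure theorems, as well as Stanley's characterization, hold in any characteristic.
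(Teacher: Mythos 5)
This theorem is not proved in the paper: it is recalled verbatim from Iarrobino \cite{Ia2} (the paper only asserts it and cites Example 3.5 there), so there is no internal argument to compare yours against; your proposal is essentially correct and in fact supplies more detail than the paper does. The codimension-2 argument is sound: levelness places all $t$ last syzygies in degree $e+2$, Hilbert--Burch gives $t+1$ generators, the first difference $h_m-h_{m-1}=1-\#\{\,i:d_i\le m\,\}$ is non-increasing for $m\le e+1$, and a nonnegative concave sequence is log-concave by AM--GM. The codimension-3 Gorenstein argument is also correct, and is essentially Iarrobino's own route: as the paper itself remarks in Question \ref{q2}(1), his proof boils down to showing that differentiable sequences are log-concave in three variables, with symmetry handling the second half exactly as you do (the identity $h_{i-1}h_{i+1}\le h_i^2 \Leftrightarrow h_i(g_i-g_{i+1})+g_ig_{i+1}\ge 0$, plus the fact that in two variables the difference sequence can only grow along the maximal line, forcing $h_i=\binom{i+2}{2}$ there, exactly as in Lemma~\ref{aaa} with $a=0$); the only unremarked case, odd socle degree at the midpoint, falls under your non-increasing case since $g_{\lfloor e/2\rfloor+1}=0$. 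For part (2), your arithmetic verification is right ($165\cdot 186=30690>30625=175^2$, and similarly through degree 13), but your sketch does not actually establish the existence of a codimension-4 Gorenstein algebra with this Hilbert function --- you, like the paper, ultimately take that construction from \cite{Ia2}; since the theorem is quoted precisely as Iarrobino's result, this reliance is acceptable, but it should be stated as a citation rather than as a proof.
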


We are now ready for the main result of this section. We extend Theorem \ref{iarro} and classify \emph{nearly} all pairs $(r,t)$ that force log-concavity for all level Hilbert functions. The only exception, which we have not been able to determine and will be discussed in depth after the theorem, is $(r,t)=(3,2)$.

\begin{theorem}\label{main}
Let $r$ and $t$ be two positive integers. Then all level Hilbert functions of codimension $r$ and type $t$ are log-concave if and only if:
\begin{enumerate}
\item $r\le 2$; or
\item $r=3$ and $t\le t_0$, where $t_0=1$ or $2$.
\end{enumerate}
Moreover, with the possible exception of the case $(r,t)=(3,2)$, the above classification is independent of the characteristic.
\end{theorem}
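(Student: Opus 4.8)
The plan is to split the statement into its easy ``if'' part and its substantive ``only if'' part, and to split the latter into the two regimes $r\ge 4$ (all $t$) and $r=3$, $t\ge 3$. For the ``if'' direction there is nothing to do: $r\le 2$ and $(r,t)=(3,1)$ are contained in Theorem~\ref{iarro}(1), and since the statement only asserts $t_0\in\{1,2\}$, no claim is made at $(3,2)$. So the work is to produce, for each remaining pair $(r,t)\ne(3,2)$, a level Hilbert function of codimension $r$ and type $t$ that fails log-concavity.

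For $r\ge 4$ I would bootstrap from Iarrobino's codimension-$4$ Gorenstein example of Theorem~\ref{iarro}(2), whose Hilbert function $g$ has the useful feature of being convex on degrees $8$ through $14$ (its second differences there all equal $1$) while $g_9g_{11}-g_{10}^2=54>0$. To move from codimension $4$ to any $r\ge 5$ keeping type $1$, add the pure powers $y_5^{e},\dots,y_r^{e}$ to the dual form $F$; by Macaulay duality this adds the constant $r-4$ to every intermediate value of $g$, and adding a positive constant to a sequence that is convex at a degree $j$ only increases $h_{j-1}h_{j+1}-h_j^2$, so the failure persists. To then raise the type to an arbitrary $t$, start from such a non-log-concave Gorenstein form of codimension $r$ whose Hilbert function has a valley, deeper than $t$, at a degree that together with its neighbours is non-maximal (this requires running a version of Iarrobino's construction at a large enough socle degree), and apply Proposition~\ref{ia1} to it and the $e$-th powers of $t-1$ general linear forms: the new Hilbert function is $\min\{g_j+(t-1),\binom{r-1+j}{j}\}$, the depth of the valley prevents the minimum from truncating at the failure degree and its neighbours, and the same convexity argument finishes the case. (When $r$ and the relevant degree are large one can instead avoid Iarrobino entirely and combine Proposition~\ref{ia1} with Lemma~\ref{aaa}, building a codimension-$r$ level Hilbert function that in some window $[i-1,i+1]$, $i$ large, equals $\binom{i+1}{2},\binom{i+2}{2},\binom{i+3}{2}$ shifted by a constant $a$ with $\binom{i+2}{2}<a$ and $\binom{i+3}{2}+a\le\binom{r-1+i+1}{i+1}$.)

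For $r=3$ and $t\ge 3$ the previous mechanisms are unavailable, since every codimension-$3$ Hilbert function entry is bounded above by $\binom{j+2}{2}$, so there is no room for a Lemma~\ref{aaa}/Proposition~\ref{ia1} shift; here I would produce an explicit infinite family by hand. The idea is to take a codimension-$3$ level Hilbert function of type $t$ and socle degree $e$ which grows maximally up to some degree $d$ and then has a convex corner near the top: a sharp drop out of the maximal value $\binom{d+2}{2}$ followed by a mild decline down to $h_e=t$, tuned so that $h_{e-2}h_e>h_{e-1}^2$. Two checks are needed. First, that the sequence is a legal $O$-sequence: a routine binomial computation, because the only delicate spot is the sharp drop out of a maximal value, where Macaulay's growth bound is least restrictive. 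Second, and this is the crux, that the sequence is realized by a level algebra, i.e.\ by a module generated by $t$ forms of degree $e$: level-ness forces $h_{e-1}\ge t+2$ (a new minimal generator in the top degree cannot have all its first-order partials absorbed into the inverse system already built, so each of the $t$ generators contributes at least one new dimension), so the family must be calibrated with $h_{e-1}$ equal to or barely above $t+2$, and one must exhibit forms whose first-order partials span a space of exactly that minimal dimension while their second-order partials still span everything below.

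The step I expect to be the main obstacle is precisely this realizability question for $r=3$: constructing an honest level algebra with the prescribed convex corner, as opposed to a merely Macaulay-admissible sequence. A lesser technical point is securing Iarrobino-type codimension-$4$ Gorenstein Hilbert functions with arbitrarily deep valleys, needed for the large-$t$ subcase of $r\ge 4$. Finally, characteristic independence comes for free: Macaulay duality via divided powers, Lemma~\ref{lem1}, Proposition~\ref{ia1}, Macaulay's growth bound, and all the genericity arguments (general forms and general linear forms over the infinite field $k$) are characteristic-free, and Iarrobino's codimension-$4$ example holds in every characteristic; hence the whole classification is characteristic-independent, with $(3,2)$ left open in every characteristic.
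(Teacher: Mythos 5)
Your plan breaks down precisely at the two places you flag, and in both cases the paper closes the gap with tools you set aside. For $r=3$, $t\ge 3$, your premise that ``there is no room for a Lemma \ref{aaa}/Proposition \ref{ia1} shift'' in codimension $3$ is mistaken: the shift is applied at the \emph{tail} of the Hilbert function, not in degrees where the sequence is maximal for its own degree. The paper takes the inverse system $M=\langle F,G_1,\dots,G_{t-1}\rangle$ with $F$ general of large degree $e$ and each $G_i$ a sum of the $e$-th powers of six general linear forms; near the socle degree the Gorenstein piece dual to $F$ contributes $(\dots,15,10,6,3,1)$ while each $G_i$ contributes $(\dots,6,6,6,3,1)$, and since the ambient bounds $\binom{j+2}{2}$ in degrees $j$ close to $e$ are enormous for $e$ large, the minimum in Proposition \ref{ia1} never truncates. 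Lemma \ref{aaa}, applied to the reversed triple $\bigl(\binom{4}{2},\binom{5}{2},\binom{6}{2}\bigr)$ (legitimate, since the log-concavity inequality is symmetric in the two outer entries), gives failure exactly when the added constant $6(t-1)$ exceeds $10$, i.e.\ for every $t\ge 3$. Your alternative --- a hand-built sequence with a ``convex corner'' near the top --- leaves the realizability of that sequence by an honest level algebra, which you yourself identify as the main obstacle, completely unproved; so as written the case $(3,t)$, $t\ge 3$, is a genuine gap.

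The large-type subcase of $r\ge 4$ is also incomplete in your proposal, as you concede: your route needs codimension-$4$ (or codimension-$r$) non-log-concave Gorenstein Hilbert functions whose deficit from maximal growth at the failure degrees exceeds $t$, and no such family is established (the quoted socle-degree-$28$ example only leaves room for roughly $t\le 46$). The paper sidesteps this entirely: since a positive nonunimodal sequence cannot be log-concave, it invokes Weiss's nonunimodal level Hilbert functions of codimension $4$ and type $t\ge 3$, and the known nonunimodality results in codimension $r\ge 5$ for every type, so the only case requiring a construction is $(4,2)$ --- which is handled essentially as you propose, by adding $G=L^{28}$ to Iarrobino's dual form and applying Lemma \ref{lem1} and Proposition \ref{ia1}. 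Your observations for $(4,2)$, your constant-plus-convexity remark, and your characteristic-independence discussion are sound and consistent with the paper; but without the two missing ingredients above, the proposal does not prove the theorem for $(3,t)$ with $t\ge 3$, nor for $(r,t)$ with $r\ge 4$ and $t$ large.
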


\begin{proof}
The case $r=1$ is trivial, while $r=2$ was proven by Iarrobino in \cite{Ia2}, along with the case $(r,t)=(3,1)$ (see Theorem \ref{iarro}). Hence, not considering $(r,t)=(3,2)$, the first open case is $(3,3)$, for which we exhibit the following infinite family of non-log-concave level Hilbert functions. (Further infinite families can be constructed along the same lines.)

Consider the inverse system module $M\subset S=k[y_1,y_2,y_3]$ generated by three forms $F, G_1,G_2$ of large socle degree $e$ (any $e\ge 10$ will do), where  $F$ is {general} and each $G_i$ is the sum of the $e$-th powers of six general linear forms.

By Lemma \ref{lem1} and Proposition \ref{ia1}, if we consider the level algebra dual to $M$, we get that the last five entries (degrees $e-4$ through $e$) of its Hilbert function $h$ are given by:
$$(\dots, 15, 10, 6, 3, 1) +$$
$$2 \times (\dots, 6, 6, 6, 3, 1)=$$
$$h=(\dots, h_{e-4}=27, 22, 18, 9, h_e=3).$$

Thus, $h$ is not log-concave by Lemma \ref{aaa}, since $2\times 6=12 >10$, as desired. (Note that $27 \times 18 > 22^2$.) We only remark that, in this example, provided we assume $e\ge 12$, choosing $F$ to be the monomial $y_1^4y_2^4y_3^{e-8}$ would be general enough for our purposes.

A similar idea also allows us to disprove log-concavity for all types $t\ge 3$, when $r=3$. Indeed, consider the inverse system module $M= \langle F,G_1,\dots,G_{t-1}\rangle$, where $F$ is general and the $G_i$ are again the sum of the $e$-th powers of six general linear forms each. Then, for any socle degree $e$ large enough ($e\ge \left(5+\sqrt{48t+73}\right)/2$ will suffice), it is easy to see that the last five entries of the Hilbert functions $h$ of the level algebra dual to $M$ are:
$$h=(\dots, h_{e-4}= 15+6(t-1), 10+6(t-1),6+6(t-1),3+3(t-1),h_e=t).$$
Hence, by Lemma \ref{aaa}, $h$ always fails log-concavity in degree $e-3$, since
$$6(t-1)>10$$
for $t\ge 3$.

Recall that Weiss \cite{We}, using ideas developed by Iarrobino and this author \cite{Za1}, constructed examples of nonunimodal level Hilbert functions of codimension 3 and any type $t\ge 5$, and of codimension 4 and any $t\ge 3$. Thus, since log-concavity is a stronger property than unimodality, Weiss' results provide an alternative proof of the failing of log-concavity in codimension 3, when $t\ge 5$.

Further, unimodality is known to fail in codimension $r\ge 5$ and any type $t\ge 1$ (see Stanley \cite{St2} for the first nonunimodal Gorenstein example, and Bernstein-Iarrobino \cite{BI} for the first one specifically in codimension $5$; a brief selection of subsequent results can be found in \cite{AS,Bo,BL,MNZ1,MNZ2,MZ1}). Combining this with Weiss' work \cite{We}, we see that the only remaining case to prove in this theorem is the non-log-concavity of level Hilbert functions of codimension 4 and type 2.

A direct way to address this is to use Iarrobino's Gorenstein examples from \cite{Ia2}, combined with Lemma \ref{lem1} and Proposition \ref{ia1} (though several other constructions are possible; see for instance the next section for an infinite family of \emph{monomial} examples). Let $F\in S=k[y_1,\dots,y_4]$ be a form whose dual Gorenstein algebra of socle degree 28 has the Hilbert function displayed in Theorem \ref{iarro}, namely:
$$h=(1,4,10,20,35,56,84,120,h_8=165,175,186,198,211,225,h_{14}=240,\dots,h_{28}=1).$$
Now let $G=L^{28},$ where $L\in S$ is a general linear form, and consider the level algebra $A$ dual to the inverse system module $\langle F,G\rangle$. It is easy to see, by Lemma \ref{lem1} and Proposition \ref{ia1}, that the Hilbert function $H$ of $A$ can be obtained by adding 1 to $h$ in all degrees $i=9,\dots,28$. That is,
$$H=(1,4,10,20,35,56,84,120,165,H_9=176,H_{10}=187,199,212,H_{13}=226,241,$$$$H_{15}=226,212,199,187,H_{19}=176,166,121,85,57,36,21,11,5,H_{28}=2).$$

Standard computations now give us that $H$ fails to be log-concave (in all degrees $i=10,\dots,13$ and $i=15,\dots,19$). As we mentioned earlier, because of the nonunimodality of level Hilbert functions of codimension 4 and any type $t\ge 3$  \cite{We} --- or alternatively, by constructions similar to the above that rely on Lemma \ref{lem1} and Proposition \ref{ia1} --- this completely settles log-concavity in codimension 4, and concludes the proof of the theorem.
\end{proof}

We now briefly turn to the codimension 3, type 2 case, which was the only one left open in Theorem \ref{main}. In fact, the case $(3,2)$ is also among the \lq \lq smallest'' for which we do not have a characterization of all possible level Hilbert functions. (See \cite{Ia1,Ia2004} for an explicit description of the level Hilbert functions of codimension $r=2$, and \cite{BuEi,St2,Za2} for Stanley's characterization of Gorenstein Hilbert functions when $r=3$.)

For brevity's sake, let $S_{3,2}$ denote the set of level Hilbert functions of codimension 3 and type 2. In Question \ref{q1}, we present a selection of problems and conjectures on $S_{3,2}$, which, along with log-concavity, we consider among the most interesting and consequential.

\begin{question}\label{q1}
\begin{enumerate}
\item \emph{Are all functions in $S_{3,2}$ unimodal?}\\
Of the few classes of level Hilbert functions of given type and codimension for which unimodality has not yet been decided, this is among the most important, along with Gorenstein Hilbert functions of codimension 4. Note that the corresponding case for pure $O$-sequences was settled in the positive in \cite{BMMNZ}.

\item \emph{Are all functions in $S_{3,2}$ {flawless}?}\\
Recall that $h=(1,h_1,\dots,h_e)$ is \emph{flawless} if $h_i\le h_{e-i}$, for all $i\le e/2$. We believe the answer to this question to be positive, but despite several attempts, so far we have not been able to prove it.\\
Also interesting, we note that a \emph{negative} answer would imply the existence of Gorenstein algebras of codimension 3 which fail the so-called \emph{strong Lefschetz property} (see \cite{BMMNZ2}, Remark 2.3 and Corollary 2.6, part (2)). The Lefschetz properties (weak and strong) can be viewed as a generalization of the Hard Lefschetz Theorem of algebraic geometry. In recent years, they have received a great deal of attention from specialists in this area of commutative algebra, with some encouraging results. While both Lefschetz properties have been shown to fail for Gorenstein algebras of codimension 3 in \emph{positive characteristic} (see, e.g., \cite{Cook}), they are still open in \emph{characteristic zero}, and we do not believe there is consensus on a conjecture.\\
Finally, flawlessness is known to hold for the subset of pure $O$-sequences (in fact, it does hold for \emph{all} pure $O$-sequences \cite{Hibi}).

\item \emph{Is the first half of any function in $S_{3,2}$ differentiable?}\\
We say that the first half of $h=(1,h_1,\dots,h_e)$ is \emph{differentiable} if
$$(\Delta h)_{\le \lfloor e/2 \rfloor}=\left(1,h_1-1,h_2-h_1,\dots, h_{\lfloor e/2 \rfloor}-h_{\lfloor e/2 \rfloor -1}\right)$$
is an $O$-sequence, i.e., it satisfies Macaulay's theorem \cite{Ma,St0}. In particular, a Hilbert function $h$ with this property is nondecreasing throughout its first half, a fact that has also not been established for $S_{3,2}$.\\
Some of the considerations we made in part (2) for flawlessness find an analog for this other natural (possible) property of level Hilbert functions of codimension 3 and type 2. In particular, a negative answer to this question would imply the failing of the \emph{weak} Lefschetz property for some Gorenstein algebras of codimension 3 (see \cite{BMMNZ2}, Remark 2.3 and Corollary 2.6, part (1)). This property, as we mentioned, is still open in characteristic zero; see \cite{BMMNZ2,HMNW,MZ0} for partial progress. Further, differentiability throughout the first half is again known for all pure $O$-sequences \cite{Hibi}.\\
Similarly to part (2), it is reasonable to believe in a positive answer also to (3).

\item \emph{Do all functions in $S_{3,2}$ satisfy the Interval Conjecture?}\\
In the level case, the \emph{Interval Conjecture} posits that if for a positive integer $\alpha $, $(1,h_1,\dots,h_i,\dots,h_e)$ and $(1,h_1,\dots,h_i+\alpha,\dots,h_e)$ are two level Hilbert functions that only differ in one degree $i$, then $(1,h_1,\dots,h_i+\beta,\dots,h_e)$ is also level, for all $\beta =0,1,\dots,\alpha$.\\
The Interval Conjectures (both in the level and the Gorenstein case) were proposed by this author in 2009, with the goal of determining some strong and natural structural properties for the set of all level Hilbert functions \cite{Za3}. However, they have so far resisted all attempts of a proof (or a counterexample). For progress in the Gorenstein case, see our recent paper with Park and Stanley \cite{PSZ}.\\
Finally, over the years, with a few coauthors we explored the \lq \lq Interval Property'' for other sequences of interest in algebra and combinatorics, and in some instances we were able to obtain interesting results (see, e.g., \cite{BMMNZ,HSZ,PZ,SZ1}).

\item \emph{Are the functions in $S_{3,2}$ independent of the characteristic of the base field?}\\
We are not aware, to date, of any Hilbert function which is level (or Gorenstein) in one characteristic but not in another. In fact, we conjectured that this is never the case (\cite{MZ}, Conjecture 3.9). Note that, with partial exceptions (e.g., some of the techniques used to study the Lefschetz properties), the characteristic $p$ methods currently available in this area have rarely proven more powerful than the tools used in characteristic zero. Therefore, as a byproduct, in order to investigate this question it might desirable to develop new ideas for the study of level Hilbert functions in positive characteristic. 
\end{enumerate}
\end{question}

\section{Log-concavity of pure $O$-sequences}

We now turn to pure $O$-sequences. Here, unimodality is still mostly open, even though it has been settled in some instances. In particular, pure $O$-sequences of type 1 (which coincide with the Hilbert functions of \emph{complete intersection} algebras) are all unimodal, as one can see, e.g., via a simple generating function argument. Also, unimodality is known to hold for pure $O$-sequences of codimension $r=2$ and any type $t$ (since it does hold for the larger class of arbitrary level Hilbert functions with the same parameters), as well as in the specific cases $(r,t)= (3,2), (3,3), (4,2)$ \cite{BMMNZ,boy0,boy1,boy2}. To our knowledge, no other pair $(r,t)$ has so far been proven to force unimodality for all pure $O$-sequences. In particular, $(r,2)$ is still open, for any $r\ge 5$.

Similarly to the case of unimodality, deciding log-concavity for pure $O$-sequences is often harder than it is for arbitrary level Hilbert functions. However, we will be able to show that, while log-concavity always holds in type 1, it fails in very low type in all codimensions $r\ge 3$ (in particular, it fails in type 2 for any $r\ge 4$). Interestingly, the main case we leave open is again $(r,t)=(3,2)$, along with $(r,t)$ for $r\ge 3$ and $t$ large enough, where we conjecture that non-log-concave pure $O$-sequences always exist.

Finally, recall that even though pure $O$-sequences are defined algebraically as Hilbert functions of monomial level algebras, and the properties of these rings may heavily depend on the characteristic of the base field, pure $O$-sequences can also be described entirely combinatorially, in terms of the number of divisors of certain monomials. Therefore, many numerical properties of these sequences (including log-concavity, unimodality, etc.) are guaranteed to be \emph{independent} of the characteristic.

We begin by recording the simple case of type 1.

\begin{proposition}\label{type1}
All pure $O$-sequences of type 1 are log-concave, in any codimension $r$.
\end{proposition}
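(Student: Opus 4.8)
The plan is to realize any type-$1$ pure $O$-sequence as the coefficient sequence of a product of ``staircase'' polynomials, and then invoke the classical fact that log-concavity (in the strict sense, with no internal zeros) is preserved under convolution. First I would unwind the definitions. A pure $O$-sequence of type $1$ and codimension $r$ is the divisor-counting function of a single monomial $\mu=y_1^{a_1}\cdots y_r^{a_r}\in S$ with each $a_j\ge 1$; its socle degree is $e=a_1+\cdots+a_r$. The number of monomial divisors of $\mu$ in degree $i$ is the number of tuples $(b_1,\dots,b_r)$ with $0\le b_j\le a_j$ and $\sum_j b_j=i$, i.e.\ the coefficient of $x^i$ in
$$P(x)\ =\ \prod_{j=1}^r \bigl(1+x+x^2+\cdots+x^{a_j}\bigr).$$
Equivalently, $R/\Ann(\mu)=R/(x_1^{a_1+1},\dots,x_r^{a_r+1})$ is a monomial complete intersection and $P(x)$ is its Hilbert series; whether or not all $a_j\ge 1$ is irrelevant to what follows.

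Next I would apply the closure of log-concavity under convolution: if $(c_i)$ and $(d_i)$ are finite sequences of nonnegative reals that are log-concave \emph{with no internal zeros}, then their convolution $e_k=\sum_i c_id_{k-i}$ is again log-concave with no internal zeros (see, e.g., \cite{Bre,Stan}). Each factor of $P(x)$ has coefficient sequence $(1,1,\dots,1)$, which is trivially log-concave with no internal zeros, so by induction on the number of factors the coefficient sequence of $P(x)$ is log-concave. Since the Hilbert function of an artinian algebra is strictly positive in degrees $0,\dots,e$, there are no internal zeros, and hence the defining inequality $h_{i-1}h_{i+1}\le h_i^2$ holds in every degree. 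This completes the argument.

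The only delicate point is the precise hypothesis of the convolution lemma: the statement fails if internal zeros are allowed (already $(1,0,1)$ is not log-concave), so one must use that the relevant sequences are positive on a contiguous interval — which they are here, being Hilbert functions of artinian algebras. I would also note, to forestall a tempting shortcut, that Newton's inequalities do \emph{not} apply: for $a\ge 2$ the polynomial $1+x+\cdots+x^a$ has non-real roots, so real-rootedness is unavailable and the combinatorial convolution fact is genuinely the tool needed. A self-contained alternative is to prove directly, by induction on $r$, that the ``sliding window sum'' $h_k\mapsto\sum_{j=0}^{a_r}h_{k-j}$ of a positive log-concave sequence is again log-concave; but this is essentially the same lemma specialized to an all-ones factor, and is no shorter.
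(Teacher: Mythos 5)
Your argument is correct and essentially identical to the paper's proof: both express the type-$1$ pure $O$-sequence as the coefficient sequence of $\prod_{j=1}^r(1+q+\cdots+q^{a_j})$ and conclude by the closure of log-concavity (for positive, no-internal-zero sequences) under taking products of polynomials. Your extra remarks on the no-internal-zeros hypothesis and the inapplicability of real-rootedness are accurate but not needed beyond what the paper already does.
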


\begin{proof}
Let the type $1$ pure $O$-sequence $(h_0=1,h_1,\dots,h_e=1)$ be generated by $x_1^{a_1}x_2^{a_2}\cdots x_r^{a_r}$. It is easy to see that the $h_i$ are determined by the following generating function identity:
$$\sum_{i=0}^e h_iq^i = \prod_{i=1}^r \left(1+q+\dots +q^{a_i}\right).$$
Thus, since the product of log-concave polynomials (with positive coefficients) is log-concave, the result immediately follows.
\end{proof}

We now present the main theorem of this section, which takes care of pure $O$-sequences in low type for all codimensions $r$, with the partial exception of $r=3$.

\begin{theorem}\label{theo}
There exist infinite families of non-log-concave pure $O$-sequences of codimension $r$ and type $t$ when:
\begin{enumerate}
\item $r=3$ and $t=4$;
\item $r\ge 4$ and $t=2,3,\dots,r+1$.
\end{enumerate}
\end{theorem}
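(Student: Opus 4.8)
The plan is to construct explicit infinite families of monomials whose pure $O$-sequences fail log-concavity, guided by the same heuristic that drove Theorem~\ref{main}: take a ``core'' that contributes a large, nearly-maximal chunk to the top of the Hilbert function, then add copies of a small monomial bump that contributes a fixed, relatively large amount uniformly in several degrees, so that Lemma~\ref{aaa} (or an analog thereof adapted to the non-maximal entries) is violated. Concretely, for part (1), with $r=3$, I would look for four monomials of degree $e$ in $k[x,y,z]$ of the form: one ``generic'' monomial $x^ay^bz^c$ with $a,b,c$ roughly $e/3$ whose divisor counts near the top agree with the maximal binomial values $\binom{i+2}{2}$, together with three monomials supported on a coordinate subspace (e.g.\ powers of $z$ times a fixed small monomial in $x,y$, or monomials like $x^iz^{e-i}$) chosen so that their combined divisor count adds a constant $\approx 18$ in the relevant consecutive degrees near the top while adding only $\approx 3$ in the socle degree. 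Then the last few Hilbert-function entries look like $(\dots,\binom{i+1}{2}+18,\binom{i+2}{2}+18,\binom{i+3}{2}+18,\dots)$ with the needed overlap to invoke Lemma~\ref{aaa}, and since $18>10=\binom{4}{2}$ at the appropriate spot log-concavity fails. (The example $(1,3,4,2)$ in the introduction and the family $(\dots,15+6(t-1),\dots)$ from the proof of Theorem~\ref{main} show the shape; here I must realize the ``$+a$'' bumps genuinely by monomials, which is the extra constraint over the general-linear-forms argument.)

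For part (2), the cleanest route is to exploit that a complete intersection contributes a symmetric, log-concave ``bell'' to the Hilbert function, and then perturb it by a few extra socle generators. Take $r\ge 4$ and start from a monomial complete intersection, e.g.\ generated by $x_1^{a}x_2^{a}\cdots x_r^{a}$ (or with slightly unequal exponents), whose pure $O$-sequence of type $1$ is the log-concave sequence $\prod_{i=1}^r(1+q+\dots+q^{a})$ by Proposition~\ref{type1}; call its entries $(g_0,\dots,g_e)$ with $g_e=1$. Now add $t-1$ further monomials of degree $e$, each a ``spike'' supported on just two variables (something like $x_1^{e}$, $x_1^{e-1}x_2$, $x_1^{e-2}x_2^2,\dots$, or $x_j^e$ for distinct $j$), so that each spike contributes roughly $j+1$ new divisors in degree $e-j$ — in particular a large boost $\approx (t-1)\cdot$(something) a couple of steps below the socle but only $+1$ in degree $e$. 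The resulting pure $O$-sequence $h$ then has $h_e=t$ while $h_{e-1},h_{e-2}$ are pushed up by roughly $(t-1)$ times a small integer; choosing $e$ (equivalently $a$) large makes $g_{e-1},g_{e-2}$ comparatively small, and one arranges $h_{e-2}h_e>h_{e-1}^2$. Varying $a$ (or $e$) gives the promised infinite family for each fixed $(r,t)$ with $2\le t\le r+1$; the bound $t\le r+1$ presumably enters because one needs $r$ coordinate directions (plus one ``diagonal'' monomial) to place $t$ independent socle generators realizing the desired bump pattern, and similarly $r=3,t=4$ in part (1) is exactly the first $(3,t)$ case where the $\binom{\cdot}{2}$-growth is slow enough for a monomial bump of the available size to break log-concavity.

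The main technical obstacle, as always with pure $O$-sequences, is that one cannot freely prescribe the Hilbert function: the numbers $h_i$ must equal the \emph{actual} count of distinct monomial divisors of the chosen generating set in degree $i$, and divisors of different generators overlap. So the real work is a careful choice of the generators' supports and exponents so that (a) the top generator (or the complete intersection part) genuinely achieves the maximal binomial values in the top degrees, (b) the ``bump'' monomials contribute the claimed \emph{new} divisors with the right multiplicities and negligible overlap with each other and with the core, and (c) these counts are uniform in $e$ (or scale predictably with $e$ or $a$) so that a single closed-form family works for all large parameter values. I expect the codimension-$3$ case (1) to be the more delicate of the two, since with only three variables the maximal Hilbert function grows like $\binom{i+2}{2}$ and there is little room; I would first pin down a single small socle degree example (mimicking $(1,3,4,2)$ but with type $4$ and failing log-concavity near the top), verify it by a direct divisor count, and then ``lift'' it by multiplying through by a power of $z$ (or inserting a long $z$-tail into each generator) to obtain the infinite family, checking that the divisor counts in the critical three consecutive degrees are unchanged by the lift. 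For part (2) the complete-intersection backbone already handles the bulk of the sequence via Proposition~\ref{type1}, so the only thing to control is the interaction of the few spike generators with the backbone in the top two or three degrees, which is a bounded, explicit computation.
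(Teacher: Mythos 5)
For part (1) your outline is essentially the paper's construction (a core monomial such as $x^4y^4z^{e-8}$ whose top entries are the maximal codimension-3 values $15,10,6,3,1$, plus three small-support bumps, then Lemma \ref{aaa}), but the numbers you propose do not check out: no degree-$e$ monomial in three variables can contribute a constant $6$ in each of three consecutive degrees near the top (having $6$ divisors in degree $e-2$ forces at least $8$ in degree $e-3$), so a ``constant bump $\approx 18$'' is unattainable; and your alternative bumps $x^iz^{e-i}$ contribute $3,4,5$ (not a constant), overlap heavily with each other and with the core, and with increments $(9,12,15)$ one gets $(15+15)(6+9)=450\le 484=(10+12)^2$, i.e., no failure. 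The choice that works is the one you mention in passing, bumps of shape $xyz^{e-2}$, each contributing exactly $4$ in degrees $e-4,e-3,e-2$, so that $a=3\cdot 4=12>10$ and Lemma \ref{aaa} applies; this is precisely the paper's $(3,4)$ family, so this half of your proposal is a correct skeleton with the verification (which you yourself defer) still to be done.

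Part (2) contains a genuine gap. You aim to violate log-concavity at the very top triple, arranging $h_{e-2}h_e>h_{e-1}^2$, from a complete-intersection backbone plus $t-1$ two-variable spikes, on the grounds that large $e$ (or $a$) makes $g_{e-1},g_{e-2}$ ``comparatively small.'' But the top entries of the backbone are independent of $a$: for all exponents $\ge 2$ one has $g_{e-1}=r$ and $g_{e-2}=\binom{r+1}{2}$, and each spike adds only a bounded number of new divisors there, so there is no asymptotic leverage at the socle end. Worse, the inequality itself cannot be arranged for the required case $t=2$: with your numbers, $2\bigl(\binom{r+1}{2}+3(t-1)\bigr)=r^2+r+6$ versus $(r+2(t-1))^2=r^2+4r+4$ fails for all $r\ge 1$, and no other choice of two monomial generators helps, because pushing $h_{e-2}$ up forces the generators to be divisible by the squares of many variables, which pushes $h_{e-1}$ up as well (e.g., two full-support generators with all exponents $\ge 2$ give $h_{e-2}\le r(r+1)$ but $h_{e-1}\ge 2r-1$, and $2r(r+1)<(2r-1)^2$ for $r\ge 3$; the low-support cases fail by the same kind of count). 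The structural obstruction is that in degree $e-j$ every generator contributes at most $\binom{r-1+j}{j}$ divisors, the same small cap for core and spikes alike, so near the socle there is no room for the needed imbalance. The paper's proof works several degrees below the top: it pairs a three-variable core $F=x_1^{r+10}x_2^{r+10}x_3^{e-2r-20}$, whose entries in degrees $e-r-10,e-r-9,e-r-8$ are the maximal codimension-3 values $\binom{r+12}{2},\binom{r+11}{2},\binom{r+10}{2}$, with a full-support monomial $G=x_1^{e-r-8}x_2^4x_3^4x_4^4x_5\cdots x_r$ contributing the constant $125\cdot 2^{r-4}>\binom{r+11}{2}$ in exactly those degrees, so Lemma \ref{aaa} applies; types $3,\dots,r+1$ are then obtained by adjoining the $r-1$ permuted copies of $G$, which have no common divisors with $F$ or each other in the relevant range. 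Your plan as written would need to be relocated away from the top and restructured along these lines to prove the statement.
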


\begin{proof}
We first show the case $(r,t)=(3,4)$, by constructing a non-log-concave pure $O$-sequence for any socle degree $e$ large enough ($e\ge 12$ will suffice). Consider the following four monomials in $x,y,z$:
$$x^4y^4z^{e-8},  {\ }x^{e-2}yz,  {\ }xy^{e-2}z,  {\ }xyz^{e-2}.$$

Standard computations show that the last five entries (degrees $e-4$ through $e$) of the pure $O$-sequence $h$ they generate are given by:
$$(\dots, 15, 10, 6, 3, 1) +$$
$$3 \times (\dots, 4, 4, 4, 3, 1)=$$
$$h=(\dots, h_{e-4}=27, 22, 18, 12, h_e=4).$$
Thus, $h$ is not log-concave by Lemma \ref{aaa}, since $3\times 4=12 >10$. (Of course, various other constructions are possible using the same idea.)

Now let $r= 4$. We first exhibit an infinite, non-log-concave family of type $t=2$. Let $F$ be any monomial in $x,y,z$ of degree $e$ large enough (any $e\ge 42$ will do), which behaves like a \emph{general} form in degrees 1 through 14. For instance,
$$F=x^{14}y^{14}z^{e-28}.$$
Next, we introduce a new variable $t$, and let
$$G=x^{e-12}y^4z^4t^4.$$
Note that $F$ and $G$ have no common divisors in any degree $i\ge e-14$.

By symmetry of type 1 pure $O$-sequences, it is easy to see that the last 15 entries (degrees $e-14$ through $e$) of the pure $O$-sequence $h$ generated by $F$ and $G$ are given by:
$$(\dots, 120,105,91,78,66,55,45,36,28,21,15,10,6,3,1) +$$
$$(\dots, 125,125,125,124,121,115,105,90,72,53,35,20,10,4,1)=$$
$$\left(\dots, h_{e-14}=120+125=245,h_{e-13}=105+125=230,h_{e-12}=91+125=216,\dots,h_e=2\right).$$
Hence, $h$ fails to be log-concave in degree $e-13$ by Lemma \ref{aaa}, since $125>105$.

We now show that the above non-log-concave construction for type 2 easily extends to all types $t\le 5$. Indeed, each of the monomials
$$G_2=x^4y^{e-12}z^4t^4, {\ }G_3=x^4y^4z^{e-12}t^4, {\ }G_4=x^4y^4z^4t^{e-12}$$
generates the same pure $O$-sequence as $G=G_1$. Also, since $e$ is large enough, the $G_j$ have no common divisors among each other or with $F$, in any degree $i\ge e-14$.

Therefore, if we add one $G_j$ at the time ($j\ge 2$) to the type 2 generating set given by $F$ and $G$, the new pure $O$-sequences we obtain (of type 3, 4, and 5, respectively) again fail log-concavity in degree $e-13$, by Lemma \ref{aaa}.

Now let $r\ge 5$. We generalize the above construction to produce infinite families of non-log-concave pure $O$-sequences of type 2 and codimension $r$. Consider a socle degree $e$ large enough (any $e\ge 3r+30$ will do), and pick a monomial $F$ in $x_1,x_2,x_3$ which behaves like a general form in degrees 1 through $r+10$. For instance,
$$F=x_1^{r+10}x_2^{r+10}x_3^{e-2r-20}.$$
Next, we introduce new variables $x_4,\dots,x_r$, and let
$$G=x_1^{e-r-8}x_2^4x_3^4x_4^4x_5\cdots x_r.$$

It is a simple exercise that we leave to the reader to show that if a monomial $T$ in variables $x_1,\dots,x_{r-1}$ generates a pure $O$-sequence $h=(1,h_1,\dots,h_s=1)$, then the monomial $T\cdot x_r$ generates $H=(1,H_1,\dots,H_s,H_{s+1}=1)$, where
$$H_i=h_i+h_{i-1}$$
for all $i\le s$.

By the unimodality of type 1 pure $O$-sequences and the assumption that $e$ is large enough, we deduce from the proof of the  case $r=4$ that the pure $O$-sequence generated by
$$x_1^{e-r-8}x_2^4x_3^4x_4^4$$
equals 125 in all degrees $\lfloor e/2 \rfloor$ through $e-12$. Thus, by the above exercise, the pure $O$-sequence generated by
$$x_1^{e-r-8}x_2^4x_3^4x_4^4x_5$$
equals $125+125=250$ in all degrees leading up to, and including, $e-13$. By iterating this process, one moment's thought gives us that the pure $O$-sequence $H$ generated by
$$G=x_1^{e-r-8}x_2^4x_3^4x_4^4x_5\cdots x_r$$
satisfies
$$H=(\dots, H_{e-r-10}=125\times 2^{r-4}, H_{e-r-9}=125\times 2^{r-4}, H_{e-r-8}=125\times 2^{r-4}, \dots).$$

Next observe that, since $e\ge 3r+30$, the codimension 3 pure $O$-sequence $h'$ generated by $F$ is maximal in degrees $i\le r+10$; that is,
$$h'_i=\binom{i+2}{2},$$
for all $i\le r+10$.

Thus, by symmetry, we have:
$$h'=\left(\dots, h'_{e-r-10}=\binom{r+12}{2}, h'_{e-r-9}=\binom{r+11}{2}, h'_{e-r-8}=\binom{r+10}{2}, \dots \right).$$

Finally, notice that, by construction, the two monomials $F$ and $G$ have no common divisors in any degree $i\ge e-r-10$.

Therefore, the relevant entries of the pure $O$-sequence $h$ generated by $F$ and $G$ are given by:
$$h_{e-r-10}=\binom{r+12}{2}+125\times 2^{r-4}, {\ }h_{e-r-9}=\binom{r+11}{2}+125\times 2^{r-4},$$$$ h_{e-r-8}=\binom{r+10}{2}+125\times 2^{r-4}.$$
Hence, by Lemma \ref{aaa}, $h$ fails log-concavity in degree $e-r-9$, since
$$125\times 2^{r-4} > \binom{r+11}{2}$$
whenever $r\ge 4$. This concludes the proof of non-log-concavity for type 2.

We now sketch a proof of the fact that, similarly to the case $r=4$, the above construction extends to produce infinite families of non-log-concave pure $O$-sequences of type $t=3,\dots, r+1$, in any codimension $r\ge 5$.

For $j=2,\dots,r$, we define $r-1$ new monomials $G_j$ by permuting $x_1$ with $x_j$ in $G=G_1$, and leaving the other variables unchanged. Hence, in any $G_j$, the variable $x_j$ appears with exponent ${e-r-8}$, and all other variables with exponent 1 or 4. It is clear that each $G_j$ generates the same pure $O$-sequence. Further, the $G_j$ have no common divisors among each other or with $F$, in any degree $i\ge e-r-10$. 

Therefore, by adding one $G_j$ at the time ($j\ge 2$) to the type 2 generating set given by $F$ and $G$, the same reasoning as above combined with Lemma \ref{aaa} again guarantees the failing of log-concavity in degree $e-r-9$, for any $t\le r+1$. This completes the proof of the theorem.
\end{proof}

We wrap up this paper with the following multi-part question, where we suggest a few more problems for future research.

\begin{question}\label{q2}
\begin{enumerate}
\item \emph{Are all pure $O$-sequences of codimension 3 and type 2 log-concave?}\\
This is perhaps the main case left open by Theorem \ref{theo}. Interestingly, as we saw in the previous section, the corresponding case for arbitrary level Hilbert functions is the only one that remains unsolved in that context.\\
Also note that both flawlessness and differentiability throughout the first half are known to hold for \emph{any} pure $O$-sequence \cite{hausel,Hibi}. Further, Iarrobino's argument for the log-concavity of Gorenstein Hilbert functions of codimension $r=3$ \cite{Ia2} essentially boiled down to showing that differentiable sequences are always log-concave for $r=3$. From this, it easily follows that all pure $O$-sequences of codimension 3 are log-concave throughout the first half. Therefore, what remains to be understood --- which is often the case when one studies pure $O$-sequences --- is their behavior in the second half.\\
Finally, a positive answer to this question would provide a new, natural proof of the unimodality of pure $O$-sequences of codimension 3 and type 2. This result was first shown in \cite{BMMNZ} as a consequence, in \emph{characteristic zero}, of the weak Lefschetz property for the corresponding monomial level algebras.

\item \emph{For any given $r\ge 3$, does log-concavity for pure $O$-sequences fail in type $t$ for all $t$ large enough?}\\
We believe the answer is positive. It does not seem difficult the generalize the non-log-concave constructions of Theorem \ref{theo} to other small types $t$, for any $r\ge 3$. However, the lack of general tools available to extend basic properties of pure $O$-sequences from one type to the next --- including the failing of log-concavity or unimodality --- appears to make this question nontrivial. Note that this is in stark contrast to the case of arbitrary level Hilbert functions, where results such as Lemma \ref{lem1} and Proposition \ref{ia1} could extensively be employed.

\item \emph{Assuming a positive answer to (2), what is the smallest $t_0=t_0(r)$ such that log-concavity fails for pure $O$-sequences of codimension $r\ge 3$ and \emph{any} type $t\ge t_0$?}\\
One might even wonder whether, given $r$, no \lq \lq gaps'' occur for the types $t$ where log-concavity fails. If this is the case, note that $2\le t_0(3)\le 4$, and $t_0(r)=2$ for all $r\ge 4$.
\end{enumerate}
\end{question}

\section*{Acknowledgements} We wish to thank the anonymous referee for a careful reading of our manuscript and several helpful comments. We also warmly thank Tony Iarrobino for alerting us to his preprint \cite{Ia2} and for subsequent email exchanges on this topic. We are grateful to our former student Sung Gi Park (Harvard) for several discussions on pure $O$-sequences (in person back at MIT in 2017 and via email most recently). This work was partially supported by a Simons Foundation grant (\#630401).

\end{document}